\newtheorem{definition}{Definition}[section]
\newtheorem{lemma}[definition]{Lemma}
\newtheorem{theorem}[definition]{Theorem}
\newtheorem{corollary}[definition]{Corollary}
\newtheorem{proposition}[definition]{Proposition}
\newtheorem{remark}[definition]{Remark}
\newcommand{\Romannum}[1]{\uppercase\expandafter{\romannumeral #1}}
\numberwithin{equation}{section}
\newcommand\keywordsname{Key words}
\newcommand\AMSname{AMS subject classifications}
\newenvironment{@abssec}[1]{%
     \if@twocolumn
       \section*{#1}%
     \else
       \vspace{.05in}\footnotesize
       \parindent .2in
         {\upshape\bfseries #1. }\ignorespaces
     \fi}
     {\if@twocolumn\else\par\vspace{.1in}\fi}
\begin{document}

\title{On the similarity of Tensors\footnote{P. Yuan's research is supported by the NSF of China (Grant No. 11271142) and
the Guangdong Provincial Natural Science Foundation(Grant No. S2012010009942),
L. You's research is supported by the Zhujiang Technology New Star Foundation of
Guangzhou (Grant No. 2011J2200090) and Program on International Cooperation and Innovation, Department of Education,
Guangdong Province (Grant No.2012gjhz0007).}}
\author{Pingzhi Yuan\footnote{{\it{Email address:\;}}yuanpz@scnu.edu.cn.}, Lihua You\footnote{{\it{Corresponding author:\;}}ylhua@scnu.edu.cn.}}
\vskip.2cm
\date{{\small
School of Mathematical Sciences, South China Normal University,\\
Guangzhou, 510631, P.R. China\\
}}
\maketitle

\begin{abstract}
In this paper, we characterize all similarity relations when $m\ge3$,
 obtain some interesting properties which are different from the case $m=2$,
and show that  the results of matrices about the Jordan canonical form  cannot be extended to tensors.
\vskip.2cm \noindent{\it{AMS classification:}} 15A18;  15A69
 \vskip.2cm \noindent{\it{Keywords:}}  tensor; product; similarity; hypergraph.
\end{abstract}

\section{Introduction}
\hskip.6cm Since the work of Qi \cite{Qi05} and Lim \cite{Li05}, the study  of tensors and the spectra of tensors (and hypergraphs) and their various applications have attracted much attention and interest.

An order $m$ dimension $n$ tensor $\mathbb{A}= (a_{i_1i_2\ldots i_m})_{1\le i_j\le n \hskip.2cm (j=1, \ldots, m)}$ over the complex field $\mathbb{C}$ is a multidimensional array with all entries $a_{i_1i_2\ldots i_m}\in\mathbb{C}\, ( i_1, \ldots, i_m\in [n]=\{1, \ldots, n\})$. The majorization matrix  $M(\mathbb{A})$ of the tensor $\mathbb{A}$ is defined as  $(M(\mathbb{A}))_{ij}=
a_{ij\ldots j}, (i, j\in[1, n])$ by Pearson \cite{Pe10}.  The unit tensor of order $m$ and dimension $n$ is the tensor $\mathbb{I}=(\delta_{i_1, i_2, \ldots, i_m}) $ with
 $$\delta_{i_1, i_2, \ldots, i_m}=\left\{
   \begin{array}{cc}
   1, & { \rm if \hskip.2cm }  i_1=i_2=\ldots=i_m; \\
    0,  &{\rm otherwise}.
   \end{array} \right.$$

 Let $\mathbb{A}$ (and $\mathbb{B}$) be an order $m\ge2$ (and $k\ge 1$), dimension $n$ tensor, respectively. Recently, Shao \cite{Sh12} defined a general product  $\mathbb{A}\mathbb{B}$ to be the following tensor $\mathbb{D}$ of order $(m-1)(k-1)+1$ and dimension $n$:
$$ d_{i\alpha_1\ldots\alpha_{m-1}}=\sum\limits_{i_2, \ldots, i_m=1}^na_{ii_2\ldots i_m}b_{i_2\alpha_1}\ldots b_{i_m\alpha_{m-1}} \quad (i\in[n], \, \alpha_1, \ldots, \alpha_{m-1}\in[n]^{k-1}).$$

The tensor product  possesses a very useful property: the associative law (\cite{Sh12}, Theorem 1.1). With the general product,  the following definition of the similarity relation of two tensors was proposed by Shao \cite{Sh12}.

\begin{definition}{\rm (\cite{Sh12}, Definition 2.3)} \label{defn11} Let $\mathbb{A}$ and $\mathbb{B}$ be two order $m$ dimension $n$ tensors. Suppose that there exist two matrices $P$ and $Q$ of order $n$ with $P\mathbb{I}Q=\mathbb{I}$ such that $\mathbb{B}=P\mathbb{A}Q$, then we say that the two tensors are similar.\end{definition}

It is easy to see that the similarity relation is an equivalent relation,  and similar tensors have the same characteristic polynomials, and thus they have the same spectrum (as a multiset). For example, the permutation similarity and the diagonal similarity (see also \cite{Sh12, YY10, YY11}) are
 two special kinds of the similarity of tensors.

\begin{definition} {\rm (\cite{Sh12})} \label{defn12}  Let $\mathbb{A}$ and $\mathbb{B}$ be two order $m$ dimension $n$ tensors.
We say that $\mathbb{A}$ and $\mathbb{B}$  are permutational similar,
if there exists some permutation matrix $P$ of order $n$ such that  $\mathbb{B}=P\mathbb{A}P^T$,
where $\sigma\in S_n$ is a permutation on the set $[n]$ and $P=P_{\sigma}=(p_{ij})$ is the corresponding permutation matrix of $\sigma$ with $p_{ij}=1\Leftrightarrow j=\sigma(i)$.\end{definition}

\begin{definition} {\rm (\cite{Sh12}, Definition 2.4)} \label{defn13} Let $\mathbb{A}$ and $\mathbb{B}$ be two order $m$ dimension $n$ tensors.
We say that $\mathbb{A}$ and $\mathbb{B}$  are diagonal similar,
if there exists some invertible diagonal matrix $D$ of order $n$ such that  $\mathbb{B}=D^{-(m-1)}\mathbb{A}Q$.\end{definition}

 About the matrices $P$ and $Q$ in Definition \ref{defn11}, \cite{Sh12} showed the following proposition.

\begin{proposition}\label{prop1}{\rm (\cite{Sh12}, Remark 2.1)}
If $P$ and $Q$ are two matrices of order $n$ with $P\mathbb{I}Q=\mathbb{I}$, where $\mathbb{I}$ is the order $m$ dimension $n$
unit tensor, then both $P$ and $Q$ are invertible matrices.
\end{proposition}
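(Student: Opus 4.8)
The plan is to translate the tensor identity $P\mathbb{I}Q=\mathbb{I}$ into an explicit scalar system and then read off invertibility from it. First I would compute the entries of $P\mathbb{I}Q$ directly from the definition of the general product (using its associativity, so that the triple product is unambiguous). Since $(P\mathbb{I})_{ii_2\cdots i_m}=p_{ii_2}$ when $i_2=\cdots=i_m$ and $0$ otherwise, contracting once more against $Q$ collapses the repeated index and yields $(P\mathbb{I}Q)_{ij_2\cdots j_m}=\sum_{t=1}^n p_{it}\,q_{tj_2}q_{tj_3}\cdots q_{tj_m}$. Hence the hypothesis is equivalent to the family of scalar equations
\begin{equation}
\sum_{t=1}^n p_{it}\,q_{tj_2}\cdots q_{tj_m}=\delta_{ij_2\cdots j_m},\qquad i,j_2,\ldots,j_m\in[n]. \tag{$\star$}
\end{equation}

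Second, to handle $P$ I would specialise $(\star)$ to $j_2=\cdots=j_m=j$, which turns the right-hand side into $\delta_{ij}$ and gives $\sum_t p_{it}(q_{tj})^{m-1}=\delta_{ij}$. Writing $R=(q_{tj}^{\,m-1})_{t,j}$ for the entrywise $(m-1)$-st power of $Q$, this says exactly $PR=I$; since $P$ is a square matrix of order $n$ it is therefore invertible, with $P^{-1}=R$.

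Third, for the harder direction I would extract $Q$ from $(\star)$ by multiplying through by $(P^{-1})_{ki}$ and summing over $i$. Using $\sum_i (P^{-1})_{ki}p_{it}=\delta_{kt}$ on the left collapses the sum over $t$, while on the right $\delta_{ij_2\cdots j_m}$ survives only when $j_2=\cdots=j_m$; this yields, for each row $k$,
\[
q_{kj_2}q_{kj_3}\cdots q_{kj_m}=\begin{cases}(P^{-1})_{kj}, & j_2=\cdots=j_m=j,\\ 0, & \text{otherwise.}\end{cases}
\]
The vanishing of these products when the column indices are not all equal is the crux. For $m\ge 3$ I would choose $j_2=a,\ j_3=b$ with $a\neq b$ and the remaining indices equal to $a$; the relation then forces $q_{ka}^{\,m-2}q_{kb}=0$, so no row of $Q$ can have two nonzero entries. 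On the other hand $R=P^{-1}$ has no zero row, and $q_{kj}^{\,m-1}=(P^{-1})_{kj}$ shows each row of $Q$ has at least one nonzero entry, so every row of $Q$ has \emph{exactly} one.

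Finally I would promote this monomial row structure to genuine invertibility. Since $R=P^{-1}$ is invertible and its zero pattern coincides with that of $Q$ (as $x^{m-1}=0\iff x=0$ over $\mathbb{C}$), the single nonzero entries of distinct rows must lie in distinct columns, for otherwise two rows of $P^{-1}$ would be proportional. Hence $Q$ is a generalized permutation matrix and in particular invertible; the case $m=2$ is immediate, since there $(\star)$ reads $PQ=I$. I expect the main obstacle to be the invertibility of $Q$: unlike $P$, it does not follow from a one-line right-inverse identity, and the argument must exploit the multilinear vanishing conditions hidden in $(\star)$ to force the generalized-permutation structure.
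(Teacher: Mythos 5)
Your proof is correct; note, however, that the paper itself gives no proof of this proposition at all --- it is imported verbatim from \cite{Sh12} (Remark 2.1), so there is no in-paper argument to compare against line by line. What is worth comparing is how your argument relates to the machinery the paper builds \emph{on top of} the proposition. Your scalar system $(\star)$, the identity $PR=I$ with $R=(q_{tj}^{\,m-1})$, and the ``at most one nonzero entry per row of $Q$'' conclusion are precisely the paper's Lemma \ref{lem1}(ii) (there written $PM(\mathbb{I}Q)=I$), Lemma \ref{lem2}, and the monomial structure of Theorem \ref{thm21}, respectively --- but the paper proves those statements \emph{using} Proposition \ref{prop1}: Lemma \ref{lem1}(i) invokes the invertibility of $P$ to kill the off-diagonal entries of $\mathbb{I}Q$, and Theorem \ref{thm21} invokes the invertibility of $Q$ to upgrade ``at most one'' to ``exactly one'' nonzero entry per row. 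Your ordering breaks that dependence: you first get invertibility of $P$ for free from the one-sided inverse $PR=I$ (square matrices over a field), then multiply $(\star)$ by $P^{-1}$ to obtain the vanishing conditions, and only then force the generalized-permutation structure of $Q$, including the distinct-columns step via linear independence of the rows of $P^{-1}$. The net effect is a self-contained, non-circular proof that simultaneously recovers the cited proposition and, for $m\ge 3$, essentially the full conclusion of Theorem \ref{thm21}; the paper's route is shorter only because it outsources the proposition to \cite{Sh12}. Your handling of $m=2$ ($(\star)$ degenerating to $PQ=I$) also correctly covers the one case where the monomial structure genuinely fails.
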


In this paper, we characterize all similarity relations when $m\ge3$ in Section 2,
and in Section 3, we obtain some interesting properties which are different from the case $m=2$,
and show that  the results of matrices about the Jordan canonical form  cannot be extended to tensors.

\section{Main results}

\hskip.6cm In this section, we will prove Theorem \ref{thm21}. We first prove the following two lemmas.

\begin{lemma}\label{lem1}
Let  $P=(p_{ij})$ and $Q$ be two matrices of order $n$ with $P\mathbb{I}Q=\mathbb{I}$,
$\mathbb{I}$ the order $m$ dimension $n$ unit tensor.
Take $\mathbb{I}Q=\mathbb{A}= (a_{i_1i_2\ldots i_m})$, then we have

{\rm (i) } $a_{i_1i_2\ldots i_m}=0$ when $(i_2, i_3, \ldots, i_m)\ne (i_2, i_2, \ldots, i_2)$;

{\rm (ii) } $PM(\mathbb{A})=I$, where $ M(\mathbb{A})$ is the majorization matrix of $\mathbb{A}$ and $I$ is the unit matrix of order $n$.
\end{lemma}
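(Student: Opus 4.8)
The plan is to reduce the whole lemma to a single coordinate identity. Since $\mathbb{A}=\mathbb{I}Q$ is the general product of the order-$m$ tensor $\mathbb{I}$ with the order-$2$ matrix $Q$, it is again a tensor of order $(m-1)(2-1)+1=m$ and dimension $n$, so $M(\mathbb{A})$ is defined and the equation $P\mathbb{A}=\mathbb{I}$ makes sense. By the associative law (\cite{Sh12}, Theorem 1.1) the hypothesis $P\mathbb{I}Q=\mathbb{I}$ gives $P\mathbb{A}=P(\mathbb{I}Q)=\mathbb{I}$. Expanding the product of the matrix $P=(p_{ij})$ with the order-$m$ tensor $\mathbb{A}=(a_{i_1i_2\ldots i_m})$ according to Shao's definition contracts the single summation index against the first slot of $\mathbb{A}$, so that for all $i,j_2,\ldots,j_m\in[n]$,
\begin{equation}
\sum_{i_2=1}^{n}p_{ii_2}\,a_{i_2 j_2\ldots j_m}=\delta_{ij_2\ldots j_m}.\tag{$\star$}
\end{equation}
I would also record, via Proposition \ref{prop1}, that $P$ is invertible; this is the only structural property of $P$ the argument actually uses.

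For (ii), I would specialize $(\star)$ to the constant multi-indices $j_2=\cdots=j_m=j$. The factor $a_{i_2 j\ldots j}$ is by definition the $(i_2,j)$ entry of $M(\mathbb{A})$, so the left side becomes the $(i,j)$ entry of $PM(\mathbb{A})$, while the right side is $\delta_{ij\ldots j}=\delta_{ij}$. Hence $PM(\mathbb{A})=I$, which is (ii).

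For (i), I would instead fix a multi-index $(j_2,\ldots,j_m)$ that is not constant, i.e.\ $(j_2,\ldots,j_m)\ne(j_2,\ldots,j_2)$. Then $\delta_{ij_2\ldots j_m}=0$ for every $i$, so $(\star)$ says that the vector $v\in\mathbb{C}^{n}$ with entries $v_{i_2}=a_{i_2 j_2\ldots j_m}$ satisfies $Pv=0$. Since $P$ is invertible, $v=0$, i.e.\ $a_{i_1 j_2\ldots j_m}=0$ for every $i_1$ whenever $(j_2,\ldots,j_m)$ is non-constant. Renaming the indices, this is precisely (i).

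Thus the genuine content lies entirely in the setup, and I expect the only delicate point to be the bookkeeping of the general products: verifying that $\mathbb{A}=\mathbb{I}Q$ has order $m$ and that $P\mathbb{A}$ contracts exactly the first index of $\mathbb{A}$, so that $(\star)$ reads as written. Once $(\star)$ and the invertibility of $P$ are in hand, both parts are one-line specializations---constant trailing indices for (ii), non-constant ones for (i). As a sanity check one may also compute $\mathbb{A}=\mathbb{I}Q$ explicitly, obtaining $a_{i_1i_2\ldots i_m}=\prod_{\ell=2}^{m}q_{i_1 i_\ell}$ with $Q=(q_{ij})$; this makes (i) transparent, since it forces each row of $Q$ to contain at most one nonzero entry, but the explicit formula is not needed for the argument above.
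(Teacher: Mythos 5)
Your proposal is correct and takes essentially the same approach as the paper: both extract the coordinate identity for $P\mathbb{A}=\mathbb{I}$, prove (i) by applying the invertibility of $P$ (Proposition \ref{prop1}) to the vector of entries with a fixed non-constant trailing multi-index, and prove (ii) by specializing to constant trailing indices and recognizing the entries of $M(\mathbb{A})$. The only cosmetic differences are that you unify both parts under the single identity $(\star)$ and note the explicit formula $a_{i_1i_2\ldots i_m}=\prod_{\ell=2}^{m}q_{i_1i_\ell}$, which the paper defers to the proof of Lemma \ref{lem2}.
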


\begin{proof} For the proof of (i), we let $\alpha=i_2i_3\ldots i_m\ne i_2i_2\ldots i_2$. By $P\mathbb{A}=\mathbb{I}$, we have
$$\delta_{i\alpha}=\sum_{j=1}^n p_{ij}a_{j\alpha}, \, i=1, 2, \ldots, n.$$
Since $\alpha\ne i_2i_2\ldots i_2$, then $\delta_{i\alpha}=0$, it follows that
$$P(a_{1\alpha}, a_{2\alpha}, \ldots, a_{n\alpha})^T=(0, 0, \ldots, 0)^T.$$
Observe that $P$ is an invertible matrix by Proposition \ref{prop1}, thus $a_{j\alpha}=0, j=1, 2, \ldots, n$, and the conclusion of (i) follows.

For the proof of (ii), by $P\mathbb{A}=\mathbb{I}$, we have
$$\delta_{ij\ldots j}=\sum_{u=1}^n p_{iu}a_{uj\ldots j}=\sum_{u=1}^n p_{iu}(M(\mathbb{A}))_{uj}=(PM(\mathbb{A}))_{ij}.$$
Hence $PM(\mathbb{A})=I$, this completes the proof of (ii).\end{proof}


\begin{lemma}\label{lem2}
Let  $\mathbb{I}$ be the  order $m\ge3$ dimension $n$  unit tensor. Suppose that  $Q=(q_{ij})$ is a matrix of order $n$ such that
$$(\mathbb{I}Q)_{i\alpha}=0$$ for all $i\in[n]$ and all $\alpha\ne j\ldots j, j\in [n]$, then there is at most one nonzero element in every row of $Q$.\end{lemma}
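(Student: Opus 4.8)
The plan is to first make the hypothesis concrete by computing the entries of $\mathbb{I}Q$ explicitly. Using Shao's general product with $\mathbb{B}=Q$ (so $k=2$ and each $\alpha_t\in[n]$), together with the fact that the unit-tensor entry $\delta_{ii_2\ldots i_m}$ is nonzero only when $i_2=\cdots=i_m=i$, the defining sum collapses to a single surviving term, giving
$$(\mathbb{I}Q)_{i\alpha_1\ldots\alpha_{m-1}}=\prod_{t=1}^{m-1}q_{i\alpha_t}.$$
Thus each entry of $\mathbb{I}Q$ is just a product of $m-1$ entries drawn from the $i$-th row of $Q$. In this language the hypothesis reads: whenever $\alpha_1,\ldots,\alpha_{m-1}$ are \emph{not} all equal (i.e. $\alpha\ne j\ldots j$ for every $j$), this product vanishes.

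Next I would argue by contradiction on a fixed row. Suppose some row $i$ of $Q$ contains two nonzero entries, say $q_{ia}\ne 0$ and $q_{ib}\ne 0$ with $a\ne b$. Since $m\ge 3$, the index string $\alpha=\alpha_1\ldots\alpha_{m-1}$ has length $m-1\ge 2$, so I can choose a non-constant string containing both $a$ and $b$, for instance $\alpha=(a,b,b,\ldots,b)$. This $\alpha$ satisfies $\alpha\ne j\ldots j$, so the hypothesis forces $(\mathbb{I}Q)_{i\alpha}=0$; but the formula above gives $(\mathbb{I}Q)_{i\alpha}=q_{ia}\,q_{ib}^{\,m-2}\ne 0$ (a product of nonzero complex numbers), a contradiction. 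Hence every row of $Q$ has at most one nonzero entry, which is the claim.

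The computation in the first step is the real crux, though it is routine once one recognizes that only the single term $i_2=\cdots=i_m=i$ survives; the one subtlety is keeping the multi-index bookkeeping straight. The essential use of the assumption $m\ge 3$ is that it guarantees $m-1\ge 2$, so that non-constant index strings of length $m-1$ actually exist and the vanishing condition has content. For $m=2$ every length-one string is trivially constant, the hypothesis becomes vacuous, and the conclusion can fail — precisely the sort of distinction between the $m\ge 3$ and $m=2$ regimes that the paper highlights. Beyond the indexing, I expect no genuine obstacle.
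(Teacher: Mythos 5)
Your proposal is correct and follows essentially the same route as the paper: both first collapse the product formula to $(\mathbb{I}Q)_{i\alpha}=q_{i\alpha_1}\cdots q_{i\alpha_{m-1}}$ and then test the non-constant string in which one index appears once and the other $m-2$ times, yielding $q_{ia}q_{ib}^{m-2}=0$. The only difference is presentational — the paper phrases the second step directly (a second nonzero entry forces the first to vanish) while you phrase it as a contradiction — which is logically the same argument.
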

\begin{proof} Since
$$(\mathbb{I}Q)_{i_1i_2\ldots i_m}=\sum_{j_2, \ldots, j_m=1}^n\delta_{i_1j_2\ldots j_m}q_{j_2i_2}\ldots q_{j_mi_m}=q_{i_1i_2}q_{i_1i_3}\ldots q_{i_1i_m},$$
 by the assumption, for every $\alpha=i_2i_3\ldots i_m\ne i_2i_2\ldots i_2$, we get
\begin{equation}\label{eq1}
q_{i_1i_2}q_{i_1i_3}\ldots q_{i_1i_m}=0.
\end{equation}

 If $q_{i_1t}\ne0$ for some $t\in[n]\setminus\{i_2\}$, then we take $i_3=\ldots =i_m=t$. By (\ref{eq1}) we have $q_{i_1i_2}q_{i_1t}^{m-2}=0$, hence $q_{i_1i_2}=0$. Since the choice of $i_2$ is arbitrary, then we have proved that for any $i\in[n]$, there is at most one nonzero element in $\{q_{i1}, q_{i2}, \ldots, q_{in}\}$. This completes the proof of  the lemma.\end{proof}

\begin{theorem}\label{thm21} Let $\mathbb{I}$ be the unit tensor of order $m\ge3$ and dimension $n\ge2$.
Suppose that $P$ and $Q$ are two matrices of order $n$ with $P\mathbb{I}Q=\mathbb{I}$,
then there exist a permutation matrix $R_{\sigma}$ and an invertible diagonal matrix $D$,
such that $Q=DR_{\sigma}$ and $P=R_{\sigma}^TD^{1-m}$,
where $\sigma\in S_n$ is a permutation on the set $[n]$. \end{theorem}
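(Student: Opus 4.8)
The plan is to extract the full structure of $Q$ from Lemmas~\ref{lem1} and \ref{lem2}, and then recover $P$ from the relation $PM(\mathbb{A})=I$ supplied by Lemma~\ref{lem1}(ii). First I would set $\mathbb{A}=\mathbb{I}Q$. By Lemma~\ref{lem1}(i), the entries $a_{i_1i_2\ldots i_m}$ vanish whenever $(i_2,\ldots,i_m)\ne(i_2,i_2,\ldots,i_2)$, which is precisely the hypothesis needed to apply Lemma~\ref{lem2} to $Q$. Hence every row of $Q$ contains at most one nonzero entry.

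The next step is to upgrade ``at most one'' to ``exactly one, in pairwise distinct columns''. Since $P\mathbb{I}Q=\mathbb{I}$, Proposition~\ref{prop1} guarantees that $Q$ is invertible, so $Q$ has no zero row; combined with the previous step, each row $i$ has precisely one nonzero entry, say in column $\sigma(i)$, and I put $d_i=q_{i\sigma(i)}\ne 0$. If the assignment $i\mapsto\sigma(i)$ failed to be injective, some column of $Q$ would be identically zero, contradicting invertibility; therefore $\sigma\in S_n$ is a permutation. Writing $D=\operatorname{diag}(d_1,\ldots,d_n)$, which is invertible because every $d_i\ne0$, and letting $R_\sigma$ be the permutation matrix of $\sigma$, one checks entrywise that $Q=DR_\sigma$.

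It then remains to identify $P$. Using the computation already performed in the proof of Lemma~\ref{lem2}, namely $(\mathbb{I}Q)_{i_1\ldots i_m}=q_{i_1i_2}\cdots q_{i_1i_m}$, the ``diagonal-type'' entries are $a_{ij\ldots j}=q_{ij}^{\,m-1}$, so the majorization matrix satisfies $(M(\mathbb{A}))_{ij}=q_{ij}^{\,m-1}$. Since $q_{ij}=d_i$ when $j=\sigma(i)$ and $q_{ij}=0$ otherwise, this gives $M(\mathbb{A})=D^{m-1}R_\sigma$. Finally Lemma~\ref{lem1}(ii) yields $P=M(\mathbb{A})^{-1}=(D^{m-1}R_\sigma)^{-1}=R_\sigma^{-1}D^{1-m}=R_\sigma^{T}D^{1-m}$, using $R_\sigma^{-1}=R_\sigma^{T}$, which are exactly the asserted forms $Q=DR_\sigma$ and $P=R_\sigma^{T}D^{1-m}$.

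I expect the only genuinely nontrivial point to be the passage from the conclusion of Lemma~\ref{lem2} (``at most one nonzero entry per row'') to the monomial, i.e.\ generalized permutation, structure of $Q$; this is where the invertibility from Proposition~\ref{prop1} is essential, both to rule out zero rows and to force the column map $\sigma$ to be a bijection. Everything afterwards is a direct substitution once the identity $(M(\mathbb{A}))_{ij}=q_{ij}^{\,m-1}$ is in hand, and the hypothesis $m\ge 3$ enters only through the exponent $m-1$ appearing in $D^{m-1}$ and $D^{1-m}$.
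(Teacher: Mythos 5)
Your proposal is correct and follows essentially the same route as the paper's own proof: apply Lemma~\ref{lem1}(i) and Lemma~\ref{lem2} to get at most one nonzero per row of $Q$, use Proposition~\ref{prop1} to upgrade this to the generalized-permutation form $Q=DR_\sigma$, then compute $M(\mathbb{I}Q)=D^{m-1}R_\sigma$ and invert via Lemma~\ref{lem1}(ii) to obtain $P=R_\sigma^TD^{1-m}$. The only difference is cosmetic: you spell out the injectivity of $i\mapsto\sigma(i)$ (via the zero-column argument), which the paper dismisses as obvious.
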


\begin{proof} Since $\mathbb{I}=P\mathbb{I}Q=P(\mathbb{I}Q)$, combining Lemmas \ref{lem1} and  \ref{lem2}, we obtain that there is at most one nonzero element in every row of $Q$. Note that $Q$ is an invertible matrix by Proposition \ref{prop1}, hence  there is precisely one nonzero element, say, $q_{i\sigma(i)}\ne0, 1\le i\le n$ in every row of $Q$.
Obviously, $\sigma(1), \sigma(2), \ldots, \sigma(n)$ is a permutation of $\{1, 2, \ldots, n\}$, hence
$$Q=DR_\sigma$$ is the product of   an invertible diagonal matrix $D=diag(d_{11}, \ldots, d_{nn})$, where $d_{ii}=q_{i\sigma(i)}$ and  a permutation matrix $R_\sigma=(r_{ij})$, where $r_{ij}=1$  if $j=\sigma(i)$ and $r_{ij}=0$ otherwise. 

Now by the general tensor product, we get
$$(M(\mathbb{I}Q))_{ij}=(\mathbb{I}Q)_{ij\ldots j}=q_{ij}^{m-1}=\left\{
   \begin{array}{cc}
   q_{i\sigma(i)}^{m-1} & { \rm if \hskip.2cm }  j=\sigma(i); \\
    0  &{\rm otherwise}.
   \end{array} \right.$$
Hence $M(\mathbb{I}Q)$ is the product of the invertible diagonal matrix $D^{m-1}=diag(d_{11}^{m-1}, \ldots, d_{nn}^{m-1})$ and the permutation matrix $R_\sigma=(r_{ij})$. By Lemma \ref{lem1} (ii), we have $PM(\mathbb{I}Q)=I$, it follows that $P=R_\sigma^TD^{1-m}$, is the product of the permutation matrix $R_\sigma^T$ and the invertible diagonal matrix $D^{1-m}=diag(d_{11}^{1-m}, \ldots, d_{nn}^{1-m})$. 
\end{proof}

\begin{remark}
By the proof of Theorem \ref{thm21}, we see that $Q$ and $P$ are closely related with $QP=D^{2-m}$ and $PQ=R_{\sigma}^TD^{2-m}R_{\sigma}$ are invertible diagonal matrices.  Note that when $m=2$, $P, Q$ are invertible matrices and $PQ=QP=I$. It implies that the case of $m\geq 3$ is completely different from the case $m=2$.
\end{remark}

Let $\mathbf{P}_n$ be the set of all permutation matrices of order $n$,
$\mathbb{P}_n$ be the set of all matrices which have the same zero patterns with some permutation matrix of order $n$.
Clearly, $\mathbf{P}_n\subseteq\mathbb{P}_n$. For example,
$ S=\left(
  \begin{array}{cc}
    0 & 1 \\
    1 & 0\\
  \end{array}
\right)\in \mathbf{P}_2, T=\left(
                                                  \begin{array}{cc}
                                                    0 & 2 \\
                                                   -3 & 0 \\
                                                  \end{array}
                                                \right)\in \mathbb{P}_2$,
                                                where $S$ and $T$ have the same zero pattern.

\begin{remark}\label{thm2} Let $\mathbb{I}$ be the unit tensor of order $m\ge3$ and dimension $n\ge2$.
Suppose that $P$ and $Q$ are two matrices of order $n$ with $P\mathbb{I}Q=\mathbb{I}$, then $P, Q\in \mathbb{P}_n$.
\end{remark}

\begin{remark} By  Definition \ref{defn11}, we know that Theorem \ref{thm21} gives a characterization for the similarities of tensors with order $m\ge3$ dimension $n$, i.e., as for the similarity of tensors, we need only consider the permutation similarity, the diagonal similarity and their compositions.
\end{remark}

\begin{theorem}\label{thm22}
 Let $\mathbb{A}$ and $\mathbb{B}$ be two order $m\geq 3$ dimension $n$ tensors. If  the tensors $\mathbb{A}$ and $\mathbb{B}$ are similar,
 then there exists a tensor $\mathbb{C}$ such that $\mathbb{A}$ and $\mathbb{C}$ are diagonal similar, and $\mathbb{B}$ and $\mathbb{C}$ are permutational similar.
\end{theorem}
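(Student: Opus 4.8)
The plan is to read off the explicit shapes of $P$ and $Q$ from Theorem \ref{thm21} and then use the associativity of the general product to split the single similarity $\mathbb{B}=P\mathbb{A}Q$ into a diagonal part and a permutation part. Since $\mathbb{A}$ and $\mathbb{B}$ are similar, by Definition \ref{defn11} there are matrices $P,Q$ of order $n$ with $P\mathbb{I}Q=\mathbb{I}$ and $\mathbb{B}=P\mathbb{A}Q$. Theorem \ref{thm21} then supplies a permutation matrix $R_\sigma$ and an invertible diagonal matrix $D$ with $Q=DR_\sigma$ and $P=R_\sigma^TD^{1-m}$, so that
\[
\mathbb{B}=\big(R_\sigma^TD^{1-m}\big)\,\mathbb{A}\,\big(DR_\sigma\big).
\]

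Next I would introduce the candidate intermediate tensor $\mathbb{C}=D^{1-m}\mathbb{A}D=D^{-(m-1)}\mathbb{A}D$. The key step is to regroup the product above by the associative law of the general product (\cite{Sh12}, Theorem 1.1), giving
\[
\mathbb{B}=R_\sigma^T\big(D^{1-m}\mathbb{A}D\big)R_\sigma=R_\sigma^T\,\mathbb{C}\,R_\sigma .
\]
For the first assertion, one checks that $D^{1-m}\mathbb{I}D=\mathbb{I}$ (a short computation using $(\mathbb{I}D)_{i_1\ldots i_m}=d_{i_1}^{\,m-1}$ when $i_1=\cdots=i_m$ and $0$ otherwise), so the pair $(D^{-(m-1)},D)$ is an admissible similarity pair; hence $\mathbb{C}=D^{-(m-1)}\mathbb{A}D$ exhibits $\mathbb{A}$ and $\mathbb{C}$ as diagonal similar in the sense of Definition \ref{defn13}. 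For the second assertion, since $R_\sigma^T$ is a permutation matrix with $(R_\sigma^T)^T=R_\sigma$, the identity $\mathbb{B}=R_\sigma^T\mathbb{C}(R_\sigma^T)^T$ is exactly the statement that $\mathbb{B}$ and $\mathbb{C}$ are permutational similar in the sense of Definition \ref{defn12}.

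The only substantive work is the middle regrouping, and I expect the main obstacle to be justifying it cleanly rather than proving anything deep: the factor $P=R_\sigma^TD^{1-m}$ is itself a product of two matrices, so one must verify that $\big(R_\sigma^TD^{1-m}\big)\mathbb{A}\big(DR_\sigma\big)$ may be reassociated as $R_\sigma^T\big(D^{1-m}\mathbb{A}D\big)R_\sigma$. This is precisely where the associativity of Shao's product is invoked, first to peel $R_\sigma^T$ off on the left and then to group $D^{1-m}\mathbb{A}D$ together on the right. Once associativity is applied, matching the two regrouped factors against Definitions \ref{defn13} and \ref{defn12} is immediate, and $\mathbb{C}=D^{-(m-1)}\mathbb{A}D$ is the desired intermediate tensor.
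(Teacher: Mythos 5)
Your proposal is correct and follows essentially the same route as the paper's own proof: invoke Theorem \ref{thm21} to write $Q=DR_\sigma$, $P=R_\sigma^TD^{1-m}$, take the intermediate tensor $\mathbb{C}=D^{1-m}\mathbb{A}D$, and regroup $\mathbb{B}=R_\sigma^T\mathbb{C}(R_\sigma^T)^T$ to read off the diagonal and permutational similarities from Definitions \ref{defn13} and \ref{defn12}. The paper states this in three lines without spelling out the associativity step or checking that $(D^{1-m},D)$ is an admissible pair; your extra verification is harmless and, if anything, makes the argument more self-contained.
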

\begin{proof}
By Definition \ref{defn11} and Theoren \ref{thm21}, there exist a permutation matrix $R$ and an invertible diagonal matrix $D$ of order $n$
such that $Q=DR$,  $P=R^TD^{1-m}$  and $\mathbb{B}=P\mathbb{A}Q$.

Take $\mathbb{C}=D^{1-m}\mathbb{A}D$, then $\mathbb{B}=R^T\mathbb{C}R=R^T\mathbb{C}(R^T)^T$.
By  Definitions \ref{defn12} and \ref{defn13}, the results hold.
 \end{proof}

\section{Some applications}
\hskip.6cm Let $Z(\mathbb{A})$ be the tensor obtained by replacing all the nonzero entries of $\mathbb{A}$ by one. Then $Z(\mathbb{A})$ is called the zero-nonzero pattern (or simply the zero pattern) of $\mathbb{A}$. Let $a$ be a complex number, we define $Z(a)=1$ if $a\not=0$ and $Z(a)=0$ if $a=0$.

\begin{lemma}\label{lem31}
Let $\mathbb{A}=(a_{i_1i_2\ldots i_m})_{1\leq i_j\leq n \hskip.12cm (j=1, \ldots, m)}$ and $\mathbb{B}=(b_{i_1i_2\ldots i_m})_{1\leq i_j\leq n \hskip.1cm (j=1, \ldots, m)}$ be two order $m\geq 3$ dimension $n$ tensors. If  the tensors $\mathbb{A}$ and $\mathbb{B}$ are diagonal similar,
then $Z(\mathbb{A})$=$Z(\mathbb{B})$.
\end{lemma}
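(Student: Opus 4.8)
The plan is to derive an explicit entrywise formula for the diagonally similar tensor $\mathbb{B}$ and observe that it differs from $\mathbb{A}$ only by a nonzero scalar in each position, so that the zero-nonzero pattern is unchanged. By Definition~\ref{defn13} (equivalently, by the form of $\mathbb{C}$ produced in the proof of Theorem~\ref{thm22}), being diagonal similar means there is an invertible diagonal matrix $D=\mathrm{diag}(d_1,\ldots,d_n)$, all $d_i\ne0$, with $\mathbb{B}=D^{1-m}\mathbb{A}D$.

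First I would expand the right factor $\mathbb{A}D$ via the general product. Since $D$ has order $2$, the product $\mathbb{A}D$ is again an order $m$ tensor, with
$$(\mathbb{A}D)_{i_1i_2\ldots i_m}=\sum_{j_2,\ldots,j_m=1}^n a_{i_1j_2\ldots j_m}\,q_{j_2i_2}\cdots q_{j_mi_m},$$
where $q_{jk}$ denotes the $(j,k)$-entry of $D$. Because $D$ is diagonal, $q_{j_ki_k}=d_{i_k}$ when $j_k=i_k$ and $0$ otherwise, so every inner sum collapses to the single term $j_2=i_2,\ldots,j_m=i_m$, giving $(\mathbb{A}D)_{i_1i_2\ldots i_m}=d_{i_2}d_{i_3}\cdots d_{i_m}\,a_{i_1i_2\ldots i_m}$.

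Next I would apply the left factor $D^{1-m}$, which acts as ordinary left matrix multiplication and therefore only rescales according to the first index: $(D^{1-m}\mathbb{A})_{i_1i_2\ldots i_m}=d_{i_1}^{1-m}a_{i_1i_2\ldots i_m}$. Invoking the associative law of the general product to write $\mathbb{B}=D^{1-m}\mathbb{A}D=(D^{1-m}\mathbb{A})D$, I obtain the closed form
$$b_{i_1i_2\ldots i_m}=d_{i_1}^{1-m}d_{i_2}d_{i_3}\cdots d_{i_m}\,a_{i_1i_2\ldots i_m}.$$
Since $D$ is invertible each $d_i\ne0$, so the scalar $d_{i_1}^{1-m}d_{i_2}\cdots d_{i_m}$ is nonzero for every index tuple; hence $b_{i_1\ldots i_m}=0$ exactly when $a_{i_1\ldots i_m}=0$, which is precisely $Z(\mathbb{A})=Z(\mathbb{B})$.

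The only step demanding care is the bookkeeping for $\mathbb{A}D$: one must check that the multiple sum over $j_2,\ldots,j_m$ really reduces to one surviving term because $D$ is diagonal, and that the factor attached to index $i_k$ is exactly $d_{i_k}$. There is no genuine obstacle beyond this; once the entrywise formula is in hand the conclusion about zero patterns is immediate.
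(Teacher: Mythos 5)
Your proof is correct and matches the paper's own argument: both expand $\mathbb{B}=D^{1-m}\mathbb{A}D$ entrywise via the general product to obtain $b_{i_1i_2\ldots i_m}=d_{i_1}^{1-m}d_{i_2}\cdots d_{i_m}\,a_{i_1i_2\ldots i_m}$ and conclude that the nonzero scalar factor preserves the zero pattern. The only cosmetic difference is that you compute the two factors separately and combine them by associativity, whereas the paper expands the full product in one sum.
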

\begin{proof}
By Definition \ref{defn13}, there exists  an invertible diagonal matrix $D=diag(d_{11}, \ldots, d_{nn})$ of order $n$ such that $\mathbb{B}=D^{1-m}\mathbb{A}D$.
Then

$b_{i_1i_2\ldots i_m}=(D^{1-m}\mathbb{A}D)_{i_1i_2\ldots i_m}$

\hskip1.4cm $=\sum\limits_{j_1,\ldots, j_m=1}^{n}(D^{1-m})_{i_1j_1}a_{j_1j_2\ldots j_m}d_{j_2i_2}\ldots d_{j_mi_m}$

\hskip1.4cm $=a_{i_1i_2\ldots i_m}d^{1-m}_{i_1i_1}d_{i_2i_2}\ldots d_{i_mi_m}.$

\noindent Therefore $b_{i_1i_2\ldots i_m}\not=0\Leftrightarrow a_{i_1i_2\ldots i_m}\not=0$, and thus $Z(\mathbb{A})$=$Z(\mathbb{B})$.
\end{proof}

\begin{theorem}\label{thm32}
Let $\mathbb{A}$ and $\mathbb{B}$ be two order $m\geq 3$ dimension $n$ tensors. If  the tensors $\mathbb{A}$ and $\mathbb{B}$ are  similar,
then $Z(\mathbb{A})$ and $Z(\mathbb{B})$ are permutational similar.
\end{theorem}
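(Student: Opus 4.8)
The plan is to reduce everything to the structural decomposition of similarity already established in Theorem~\ref{thm22}, together with the zero-pattern invariance of diagonal similarity recorded in Lemma~\ref{lem31}; the only genuinely new ingredient I will need is that permutational similarity acts on a tensor by a transparent relabelling of its indices, and therefore carries zero patterns to zero patterns under one and the same permutation. So the statement should follow by stitching these three facts together.

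First I would invoke Theorem~\ref{thm22}: since $\mathbb{A}$ and $\mathbb{B}$ are similar, there is a tensor $\mathbb{C}$ with $\mathbb{A}$ diagonal similar to $\mathbb{C}$ and $\mathbb{B}$ permutational similar to $\mathbb{C}$, say $\mathbb{B}=R^T\mathbb{C}R$ for a permutation matrix $R=R_\sigma$. Lemma~\ref{lem31} applied to the diagonal similarity of $\mathbb{A}$ and $\mathbb{C}$ gives $Z(\mathbb{A})=Z(\mathbb{C})$ at once, so the whole problem collapses to comparing $Z(\mathbb{B})$ with $Z(\mathbb{C})$.

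Second, I would analyze the permutational similarity $\mathbb{B}=R^T\mathbb{C}R$ at the level of entries. Expanding $R^T\mathbb{C}R$ through the general product, the effect of $R^T$ acting on the left slot and $R$ acting on each of the $m-1$ right slots is simply to relabel every index through the underlying permutation; concretely one obtains $b_{i_1i_2\ldots i_m}=c_{\sigma^{-1}(i_1)\sigma^{-1}(i_2)\ldots\sigma^{-1}(i_m)}$. Because this is a bijective relabelling of index tuples, $b_{i_1\ldots i_m}\neq 0$ holds if and only if the corresponding entry of $\mathbb{C}$ is nonzero, which is exactly the identity $Z(\mathbb{B})=R^TZ(\mathbb{C})R$. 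In other words $Z$ commutes with permutational similarity, so $Z(\mathbb{B})$ is permutational similar to $Z(\mathbb{C})$. Combining this with $Z(\mathbb{A})=Z(\mathbb{C})$ yields $Z(\mathbb{B})=R^TZ(\mathbb{A})R$, and hence $Z(\mathbb{A})$ and $Z(\mathbb{B})$ are permutational similar, as claimed.

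The main obstacle, such as it is, lies entirely in the entrywise computation $b_{i_1\ldots i_m}=c_{\sigma^{-1}(i_1)\ldots\sigma^{-1}(i_m)}$: it requires carefully tracking the general product with $R^T$ entering on the left and $R$ entering in all $m-1$ right positions, and this is exactly where an orientation error (whether the relabelling is by $\sigma$ or $\sigma^{-1}$, and whether $R$ or $R^T$ appears in the final conjugation of $Z(\mathbb{C})$) is easiest to commit. Once that formula is pinned down, the passage to zero patterns is immediate, since a permutation acts as a bijection on the set of index tuples and therefore preserves the nonzero locations up to the induced relabelling.
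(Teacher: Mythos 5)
Your proposal is correct and follows essentially the same route as the paper: decompose the similarity via Theorem~\ref{thm22}, apply Lemma~\ref{lem31} to get $Z(\mathbb{A})=Z(\mathbb{C})$, and then check entrywise that a permutational similarity is a bijective relabelling of index tuples, so $Z$ commutes with it. Your orientation ($\mathbb{B}=R^T\mathbb{C}R$, giving $b_{i_1\ldots i_m}=c_{\sigma^{-1}(i_1)\ldots\sigma^{-1}(i_m)}$) is the transpose of the paper's convention ($\mathbb{B}=R\mathbb{C}R^T$, giving $c_{\sigma(i_1)\ldots\sigma(i_m)}$), but both are consistent and yield the same conclusion.
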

\begin{proof}
By Theorem \ref{thm22}, there exists a tensor $\mathbb{C}$ such that $\mathbb{A}$ and $\mathbb{C}$ are diagonal similar, and $\mathbb{B}$ and $\mathbb{C}$ are permutational similar. It is easy that $Z(\mathbb{A})=Z(\mathbb{C})$ by Lemma \ref{lem31}.
  Now we show that $Z(\mathbb{B})$ and $Z(\mathbb{C})$ are permutational similar.

Let $R$ be a permutation matrix of order $n$ such that  $\mathbb{B}=R\mathbb{C}R^T$. Then

  $(Z(\mathbb{B}))_{i_1i_2\ldots i_m}=(Z(R\mathbb{C}R^T))_{i_1i_2\ldots i_m}$

\hskip2.5cm $=Z(\sum\limits_{j_1,\ldots, j_m=1}^{n}r_{i_1j_1}c_{j_1j_2\ldots j_m}(R^T)_{j_2i_2}\ldots (R^T)_{j_mi_m})$

\hskip2.5cm $=Z(\sum\limits_{j_1,\ldots, j_m=1}^{n}r_{i_1j_1}c_{j_1j_2\ldots j_m}r_{i_2j_2}\ldots r_{i_mj_m})$

\hskip2.5cm $=Z(c_{\sigma(i_1)\sigma(i_2)\ldots \sigma(i_m)})$

\hskip2.5cm $=\sum\limits_{j_1,\ldots, j_m=1}^{n}r_{i_1j_1}Z(c_{j_1j_2\ldots j_m})r_{i_2j_2}\ldots r_{i_mj_m}.$

\noindent So $Z(\mathbb{B})=RZ(\mathbb{C})R^T$,  and thus $Z(\mathbb{A})$ and $Z(\mathbb{B})$ are permutational similar.
\end{proof}

\begin{corollary}\label{cor33}
Let $\mathbb{A}$ and $\mathbb{B}$ be two order $m\geq 3$ dimension $n$ tensors, $N(\mathbb{A})$  the number of the nonzero entries of tensor $\mathbb{A}$.
If  the tensors $\mathbb{A}$ and $\mathbb{B}$ are  similar, then $N(\mathbb{A})=N(\mathbb{B})$.
\end{corollary}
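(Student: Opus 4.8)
The plan is to derive this directly from Theorem \ref{thm32}, which already does the substantive work. The only thing I need to supply is the observation that permutation similarity of zero-patterns preserves the total count of nonzero entries.

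First I would record the elementary identity that ties $N$ to the zero-pattern: since every nonzero entry of $\mathbb{A}$ becomes a $1$ in $Z(\mathbb{A})$ and every zero entry stays $0$, the number $N(\mathbb{A})$ equals the number of $1$'s in $Z(\mathbb{A})$, equivalently the sum $\sum_{i_1,\ldots,i_m} (Z(\mathbb{A}))_{i_1\ldots i_m}$ of all entries of $Z(\mathbb{A})$. The same holds for $\mathbb{B}$. Thus it suffices to show that $Z(\mathbb{A})$ and $Z(\mathbb{B})$ have the same number of $1$-entries.

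Next I would invoke Theorem \ref{thm32}: since $\mathbb{A}$ and $\mathbb{B}$ are similar, $Z(\mathbb{A})$ and $Z(\mathbb{B})$ are permutational similar, so there is a permutation $\sigma \in S_n$ with $(Z(\mathbb{B}))_{i_1 i_2 \ldots i_m} = (Z(\mathbb{A}))_{\sigma(i_1)\sigma(i_2)\ldots \sigma(i_m)}$ for all index tuples, exactly as displayed in the chain of equalities inside the proof of Theorem \ref{thm32}. The map $(i_1,\ldots,i_m)\mapsto(\sigma(i_1),\ldots,\sigma(i_m))$ is a bijection of $[n]^m$ onto itself, because $\sigma$ is a bijection of $[n]$ and the map acts coordinatewise. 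Consequently this relabeling carries the support (the set of index tuples where the entry is $1$) of $Z(\mathbb{B})$ bijectively onto the support of $Z(\mathbb{A})$, so the two supports have the same cardinality; combined with the identity from the first step, $N(\mathbb{A}) = N(\mathbb{B})$.

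There is no genuine obstacle here, as the statement is a short consequence of Theorem \ref{thm32}. The only point requiring a moment's care is making explicit that permutation similarity of tensors acts on index tuples by a single permutation applied in every coordinate, and that such a coordinatewise action of a bijection is again a bijection; this guarantees that no nonzero entries are created or destroyed, only repositioned.
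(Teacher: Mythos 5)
Your proof is correct and follows essentially the same route as the paper: both invoke Theorem \ref{thm32} to get $Z(\mathbb{B})=RZ(\mathbb{A})R^T$, translate this into the entrywise identity $(Z(\mathbb{B}))_{i_1\ldots i_m}=(Z(\mathbb{A}))_{\sigma(i_1)\ldots\sigma(i_m)}$, and conclude that the nonzero counts agree. The only difference is that you make explicit the final counting step (the coordinatewise action of $\sigma$ is a bijection of $[n]^m$, so supports have equal cardinality), which the paper leaves implicit.
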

\begin{proof}
By Theorem \ref{thm32}, there exists a   permutation matrix $R=R_{\sigma}=(r_{ij})$ of order $n$  such  that  $Z(\mathbb{B})=RZ(\mathbb{A})R^T$,
where $\sigma\in S_n$ is a  permutation on the set $[n]$ and $r_{ij}=1\Leftrightarrow j=\sigma(i)$. Then similar to the proof of Theorem \ref{thm32}, we have

$(Z(\mathbb{B}))_{i_1i_2\ldots i_m}=(RZ(\mathbb{A})R^T)_{i_1i_2\ldots i_m}$

\hskip2.5cm $=\sum\limits_{j_1,\ldots, j_m=1}^{n}r_{i_1j_1}(Z(\mathbb{A}))_{j_1j_2\ldots j_m}(R^T)_{j_2i_2}\ldots (R^T)_{j_mi_m}$

\hskip2.5cm $=\sum\limits_{j_1,\ldots, j_m=1}^{n}r_{i_1j_1}(Z(\mathbb{A}))_{j_1j_2\ldots j_m}r_{i_2j_2}\ldots r_{i_mj_m}$

\hskip2.5cm $=(Z(\mathbb{A}))_{\sigma(i_1)\sigma(i_2)\ldots \sigma(i_m)}.$

\noindent Therefore $(Z(\mathbb{B}))_{i_1i_2\ldots i_m}\not=0\Leftrightarrow (Z(\mathbb{A}))_{\sigma(i_1)\sigma(i_2)\ldots \sigma(i_m)}\not=0$, and thus $N(\mathbb{A})$=$N(\mathbb{B})$.
\end{proof}

\begin{remark}\label{rem34}
Note that the result of Corollary \ref{cor33} does not hold when $m=2$. For example,
let $ P=\left(
  \begin{array}{cc}
    1 & 1 \\
    1 & 0\\
  \end{array}
\right), $
$Q=\left(
  \begin{array}{cc}
    1 & -1 \\
    1 & 0\\
  \end{array}
\right),$   $A=\left(
  \begin{array}{cc}
    0 & 0 \\
    1 & 0\\
  \end{array}
\right),$  $B=PAQ=\left(
  \begin{array}{cc}
    1 & -1 \\
    1 & -1\\
  \end{array}
\right).$ It is clear $PIQ=QIP=I$, then the matrix $A$ and $B$ are similar, but $N(A)=1\not=N(B)=4.$
\end{remark}

\begin{definition}{\rm (\cite{HHLQ13})}\label{defn35}
Let  $\mathbb{A}=(a_{i_1i_2\ldots i_m})_{1\leq i_j\leq n \hskip.12cm (j=1, \ldots, m)}$ be an order $m\geq 3$ dimension $n$ tensor.
If $ a_{i_1i_2\ldots i_m} \equiv 0$ whenever $\min\{i_2, \ldots, i_m\}<i_1$, then $\mathbb{A}$ is called an upper triangular tensor.
If $ a_{i_1i_2\ldots i_m}\equiv 0$ whenever $\max\{i_2, \ldots, i_m\}>i_1$, then $\mathbb{A}$ is called a low triangular tensor.
If $\mathbb{A}$ is either upper or low triangular tensor, then $\mathbb{A}$ is called a  triangular tensor.
If $a_{i_1i_2\ldots i_m}\equiv 0$ whenever $i_1i_2\ldots i_m\not=i_1i_1\ldots i_1$, then $\mathbb{A}$ is called a   diagonal tensor.
\end{definition}

Clearly,  a  triangular tensor is  both an upper and a low triangular tensor.

\begin{corollary}\label{cor36}
Let $\mathbb{A}$ and $\mathbb{B}$ be two order $m\geq 3$ dimension $n$ tensors.
Suppose that $\mathbb{A}$ and $\mathbb{B}$ are  similar,  and $\mathbb{A}$  is a  diagonal tensor,
then $\mathbb{B}$ is a diagonal tensor.
\end{corollary}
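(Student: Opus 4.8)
The plan is to push everything down to the level of zero patterns, where Theorem \ref{thm32} has already done the serious combinatorial work, and then to close the argument with the single observation that the underlying permutation $\sigma$ is a bijection. First I would record that a diagonal tensor has a diagonal zero pattern: since $\mathbb{A}$ is diagonal, $a_{i_1i_2\ldots i_m}=0$ whenever $i_1i_2\ldots i_m\ne i_1i_1\ldots i_1$, and because $Z$ sends zeros to zeros, the pattern $Z(\mathbb{A})$ satisfies $(Z(\mathbb{A}))_{j_1j_2\ldots j_m}=0$ unless $j_1=j_2=\cdots=j_m$.

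Next I would invoke Theorem \ref{thm32}: as $\mathbb{A}$ and $\mathbb{B}$ are similar, there is a permutation matrix $R=R_{\sigma}$ with $Z(\mathbb{B})=RZ(\mathbb{A})R^T$, where $\sigma\in S_n$ and $r_{ij}=1\Leftrightarrow j=\sigma(i)$. The entrywise identity already computed in the proof of Corollary \ref{cor33} then gives $(Z(\mathbb{B}))_{i_1i_2\ldots i_m}=(Z(\mathbb{A}))_{\sigma(i_1)\sigma(i_2)\ldots\sigma(i_m)}$, so I can read off each entry of $Z(\mathbb{B})$ directly from a single entry of $Z(\mathbb{A})$.

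The decisive step is then immediate. By the previous paragraph, $(Z(\mathbb{B}))_{i_1i_2\ldots i_m}$ can be nonzero only when $\sigma(i_1)=\sigma(i_2)=\cdots=\sigma(i_m)$; since $\sigma$ is injective this forces $i_1=i_2=\cdots=i_m$. Hence $(Z(\mathbb{B}))_{i_1i_2\ldots i_m}=0$ whenever $i_1i_2\ldots i_m\ne i_1i_1\ldots i_1$, which says precisely that $Z(\mathbb{B})$ is a diagonal pattern, so that $b_{i_1i_2\ldots i_m}=0$ off the main diagonal and $\mathbb{B}$ is a diagonal tensor as required.

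I do not expect a genuine obstacle here: the content of the corollary is entirely carried by Theorem \ref{thm32} and the entrywise formula of Corollary \ref{cor33}, and the only residual point is the use of the injectivity of $\sigma$ to transport the ``all indices equal'' condition from $\mathbb{A}$ to $\mathbb{B}$. If anything merits a word of care, it is simply noting that $\sigma$ being a bijection is what makes the diagonal location $(i,i,\ldots,i)$ the \emph{only} preimage of a diagonal location under $\sigma$, so that no off-diagonal entry of $\mathbb{B}$ can acquire a nonzero value.
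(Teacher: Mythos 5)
Your proposal is correct and follows essentially the same route as the paper's own proof: both invoke Theorem \ref{thm32} to obtain $Z(\mathbb{B})=RZ(\mathbb{A})R^T$, use the entrywise identity $(Z(\mathbb{B}))_{i_1i_2\ldots i_m}\neq 0\Leftrightarrow(Z(\mathbb{A}))_{\sigma(i_1)\sigma(i_2)\ldots\sigma(i_m)}\neq 0$ from the proof of Corollary \ref{cor33}, and then exploit the injectivity of $\sigma$ to transfer the diagonal pattern from $\mathbb{A}$ to $\mathbb{B}$. Your explicit remark that bijectivity of $\sigma$ is the decisive point is exactly the content of the paper's observation that $i_1i_2\ldots i_m\neq i_1i_1\ldots i_1$ if and only if $\sigma(i_1)\sigma(i_2)\ldots\sigma(i_m)\neq\sigma(i_1)\sigma(i_1)\ldots\sigma(i_1)$.
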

\begin{proof}
We only need to show $(Z(\mathbb{B}))_{i_1i_2\ldots i_m}=0$ (and thus $b_{i_1i_2\ldots i_m}=0)$
for any $i_1i_2\ldots i_m\not=i_1i_1\ldots i_1$ where $i_1\in[n]$. 

By Theorem \ref{thm32}, there exists a   permutation matrix $R=R_{\sigma}=(r_{ij})$ of order $n$  such  that  $Z(\mathbb{B})=RZ(\mathbb{A})R^T$,
where $\sigma\in S_n$ is a  permutation on the set $[n]$ and $r_{ij}=1\Leftrightarrow j=\sigma(i)$. Then
by the proof of Corollary \ref{cor33},
 we have $(Z(\mathbb{B}))_{i_1i_2\ldots i_m}\not=0\Leftrightarrow (Z(\mathbb{A}))_{\sigma(i_1)\sigma(i_2)\ldots \sigma(i_m)}\not=0$.
 Note that  $i_1i_2\ldots i_m\not=i_1i_1\ldots i_1$ if and only if $\sigma(i_1)\sigma(i_2)\ldots\sigma(i_m)\not=\sigma(i_1)\sigma(i_1)\ldots\sigma(i_1)$,
 therefore $(Z(\mathbb{B}))_{i_1i_2\ldots i_m}=0$ when $i_1i_2\ldots i_m\not=i_1i_1\ldots i_1$  since $\mathbb{A}$ is a  diagonal tensor.
\end{proof}

\begin{remark}\label{rem37}
Note that the result of Corollary \ref{cor36} does not hold when $m=2$.
For example,  all real symmetric matrices  are similar to   diagonal matrices, but symmetric matrices are not diagonal matrices in general.
Another example,  if a real matrix $A$ of order $n$ has $n$ distinct eigenvalues, then $A$ is similar to  diagonal matrices,
but $A$ is  not necessarily a diagonal matrix.
\end{remark}

\begin{definition}
A Jordan block $J_k(\lambda)$ is an upper triangular matrix of order $k$ of the form
$$ P=\left(
  \begin{array}{ccccc}
    \lambda & 1 &    &    &   \\
            &  \lambda &  1 &   & \\
              &   & \ddots & \ddots &\\
              &    &  &\ddots & 1\\\
              &  &   &  & \lambda
  \end{array}
\right). $$
There are $k-1$ terms 1 in the superdiagonal; the scalar $\lambda$ appears $k$ times on the main diagonal. All other entries are zero, and $J_1(\lambda)=[\lambda]$.
\end{definition}

\begin{theorem}{\rm(\cite{HJ2005}, Jordan canonical form theorem)}
Let $A$ be a given complex matrix of order $n$. There is a nonsingular matrix $S$ of order $n$ such that
$A=SJS^{-1}$, where
$$J=\left(
  \begin{array}{ccccc}
    J_{n_1}(\lambda_1) &  &    &  0  \\
            &  J_{n_2}(\lambda_2) &  &    \\
              &   & \ddots & \\
             0 &    &  &J_{n_k}(\lambda_k)
              \end{array}
\right) $$
 \noindent is a Jordon matrix with $n_1+n_2+\ldots+n_k=n$, and $J$ is unique up to permutations of the diagonal Jordan blocks, called the Jordan canonical form of $A$. The eigenvalues $\lambda_i, i=1,\ldots, k$ are not necessarily distinct. If $A$ is a real matrix with only real eigenvalues, then the similarity matrix $S$ can be taken to be real.
\end{theorem}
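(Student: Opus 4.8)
The plan is to prove existence by reducing to the nilpotent case and then settling the purely combinatorial core, after which uniqueness will follow from basis-independent rank invariants. First I would view $A$ as a linear operator on $\mathbb{C}^n$ and use that over $\mathbb{C}$ the characteristic polynomial splits as $\prod_{i=1}^{k}(x-\lambda_i)^{m_i}$ with the $\lambda_i$ distinct. The primary decomposition theorem (equivalently, Cayley--Hamilton together with the pairwise coprimality of the factors $(x-\lambda_i)^{m_i}$, made explicit by a partial-fractions/B\'ezout identity) then gives an $A$-invariant direct sum $\mathbb{C}^n=\bigoplus_{i=1}^{k}V_i$, where $V_i=\ker(A-\lambda_i I)^{m_i}$ is the generalized eigenspace. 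On each $V_i$ the operator $N_i:=(A-\lambda_i I)|_{V_i}$ is nilpotent, so after this step it suffices to produce, for a single nilpotent operator, a basis in which it is a direct sum of nilpotent Jordan blocks $J_{n_j}(0)$; translating back by $\lambda_i I$ converts these into the blocks $J_{n_j}(\lambda_i)$.

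The heart of the argument is the nilpotent case, which I would prove by induction on $\dim V$. Let $N$ be nilpotent on $V$ and set $W=\operatorname{im}N$; since $N$ is singular when $V\ne 0$, we have $\dim W<\dim V$, so the inductive hypothesis furnishes a Jordan basis of $W$ consisting of chains $N^{s-1}u,\ldots,Nu,u$. Each top vector $u\in W$ is of the form $u=Nv$ for some $v\in V$, so I lengthen every chain by prepending such a preimage $v$; the augmented vectors remain independent and their $N$-images recover the old chains. Finally I extend the bottom vectors of all chains (which lie in $\ker N$) to a basis of $\ker N$ by adjoining length-one chains, and a dimension count via $\dim V=\dim W+\dim\ker N$ confirms the collected chains form a basis of $V$. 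Reading off the matrix of $N$ in this chain-adapted basis produces exactly a block-diagonal sum of $J_{\bullet}(0)$, and assembling the bases of all the $V_i$ yields the full Jordan matrix $J$, with $S$ the matrix whose columns are the chain vectors.

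For uniqueness I would compute, for each eigenvalue $\lambda$ and each $k\ge 1$, the quantity $\operatorname{rank}(A-\lambda I)^{k-1}-2\operatorname{rank}(A-\lambda I)^{k}+\operatorname{rank}(A-\lambda I)^{k+1}$ and verify that it equals the number of Jordan blocks $J_k(\lambda)$; since ranks are similarity invariants, this pins down the multiset of blocks and hence $J$ up to permutation of diagonal blocks. The real case is handled by observing that when $A$ is real with only real eigenvalues, every kernel, image, and linear system solved in the construction has real solutions, so all chain vectors—and therefore $S$—may be taken real. I expect the main obstacle to be the inductive lengthening of chains in the nilpotent step: one must check simultaneously that the prepended preimages are independent of one another and of the inherited chains, and that extending the bottom vectors to a basis of $\ker N$ does not disturb the chain structure, since this bookkeeping is where a naive argument most easily breaks down.
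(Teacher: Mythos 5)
The paper does not prove this statement at all: it is quoted as classical background from Horn and Johnson \cite{HJ2005}, and is used only to motivate Remark \ref{rem310} (that the Jordan canonical form theory cannot be extended to tensors). So there is no proof in the paper to compare yours against, and your attempt must be judged on its own merits. On those merits it is the standard proof and is essentially correct: the primary decomposition into generalized eigenspaces $V_i=\ker(A-\lambda_iI)^{m_i}$ correctly reduces everything to the nilpotent case; the induction on dimension via $W=\operatorname{im}N$, lengthening each chain by a preimage of its top vector and completing the chain bottoms to a basis of $\ker N$, is the classical argument, and the dimension count $\dim V=\dim W+\dim\ker N$ is used correctly. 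The bookkeeping you flag as the main obstacle is real but is settled by the usual trick: given a vanishing linear combination of all chain vectors and the adjoined kernel vectors, apply $N$ once; this kills the kernel vectors and the chain tops and turns the rest into a linear combination of the inherited Jordan basis of $W$, forcing those coefficients to vanish, after which the leftover terms lie in $\ker N$ and vanish because the chain bottoms together with the adjoined vectors were chosen to be a basis of $\ker N$. Your uniqueness argument is also correct: for a single block $J_s(\lambda)$ one has $\operatorname{rank}(J_s(\lambda)-\lambda I)^{j}=\max(s-j,0)$, so the second difference $\operatorname{rank}(A-\lambda I)^{k-1}-2\operatorname{rank}(A-\lambda I)^{k}+\operatorname{rank}(A-\lambda I)^{k+1}$ counts exactly the blocks $J_k(\lambda)$, and these ranks are similarity invariants. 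The real case is also handled correctly, since every kernel, image, and linear system in the construction is defined over $\mathbb{R}$ when $A$ and its eigenvalues are real. The only shortfall is that your write-up remains a sketch precisely at the independence verification; filling in the two-step argument above would make it complete.
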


Clearly, the Jordon  matrix $J$ is an upper triangular matrix.

\begin{remark}\label{rem310}
The results of matrices about the Jordan canonical form  cannot be extended to tensors since not all tensors are similar to some upper triangular tensor by Corollary \ref{cor33}.
\end{remark}

\end{document}